\theoremstyle{plain}
\newtheorem{thm}{Theorem}[section]
\newtheorem*{thm*}{Theorem}
\newtheorem{prop}[thm]{Proposition}
\newtheorem{lemma}[thm]{Lemma}
\newtheorem{corollary}[thm]{Corollary}
\theoremstyle{definition}
\newtheorem{definition}[thm]{Definition}
\newtheorem{example}[thm]{Example}
\newtheorem{notation}[thm]{Notation}
\theoremstyle{remark}
\newtheorem{remark}[thm]{Remark}
\newcommand{\R}{\mathbb{R}}
\newcommand{\Z}{\mathbb{Z}}
\newcommand{\C}{\mathbb{C}}
\newcommand{\define}{\mathrel{\mathop:}=}
\newcommand{\RS}{\Phi}
\newcommand{\App}{\mathcal{A}}
\newcommand{\bound}{\partial} 
\newcommand{\sW}{W_0} 
\newcommand{\aW}{W} 
\newcommand{\Cf}{{\mathcal{{C}}_{f}}} 
\newcommand{\fa}{{\bf{c}_f}} 
\newcommand{\Cfm}{{\mathcal{{C}}_{w_0}}} 
\newcommand{\stab}{\mathrm{Stab}} 
\newcommand{\geomSigma}{\vert\Sigma\vert} 
\numberwithin{equation}{thm}
\begin{document}

\hypersetup{pdfauthor={Schwer},pdftitle={Root operators, root groups and retractions}}.
\title{Root operators, root groups and retractions}
\author{Petra Schwer}
\address{Petra Schwer, Department of Mathematics, Karlsruhe Institute of Technology, Kaiserstr. 89-93, 76133 Karlsruhe, Germany}
\email{petra.schwer@kit.edu}

\thanks{The author was partially supported by the DFG Project SCHW 1550/2-1.}
\date{ \today }

\begin{abstract}
We prove that the Gaussent--Littelmann root operators on galleries can be
expressed purely in terms of retractions of a (Bruhat-Tits) building. In addition we  establish a connection to the root datum at infinity.  
\end{abstract}

\maketitle

\section{Introduction}

Root operators were introduced by Littelmann \cite{LittelmannPaths} in the context of the path model for finite dimensional representations of a connected complex semisimple algrebraic group $G$. 
They provide a method to modify paths in an apartment in a controlled way determined by a chosen, fixed, root direction.  
Later Gaussent and Littelmann \cite{GaussentLittelmann} defined a version for galleries while establishing a connection between the path model and the geometry of the affine Grassmanian. 

It is mentioned in some places in the literature that there is a connection between root operators and retractions in the affine building. In this note we will make this connection explicit and prove, in Theorem~\ref{thm:ef} and \ref{thm:etilde},  that the root operators can be expressed purely in terms of the buildings' retractions. In addition, we will give an interpretation using root groups of the spherical building at infinity. 

The proof of our main statement does not depend on the cardinality of the branching of the building. The only assumption we need is, that the building in question is thick. In particular, we do not need to assume that the building is of Bruhat-Tits type. And, if it comes from a group, then the proof does not depend on the underlying field. The heuristic reason is that a root operator maps a gallery which is contained in an apartment to another gallery in the same apartment. Hence they don't  ``see'' the branching of the building. 

Retractions are an essential tool in the theory of buildings which appear in various applications.  In case that the building is the Bruhat-Tits building of some algebraic group they are strongly linked to two kinds of decompositions of the group: the Iwasawa and Cartan decomposition. We make this connection explicit in Section~\ref{sec-retractions}.  

The (pre-)images of the retractions in question are strongly connected to certain kinds of double coset intersections of subgroups of a semisimple algebraic group. Numerous works make use of this connection, such as \cite{Hitzelberger}, \cite{ParkinsonRamSchwer}, \cite{GaussentLittelmann}, \cite{KapovichMillson}, \cite{KLM1} or \cite{KLM3}, to name just a few. 
 Mili\'{c}evi\'{c}, Thomas and the author recently used this connection (as well as the root operators themselves) in their new approach to affine Deligne-Lusztig varieties in  \cite{ADLV}.

We start by recalling the definition of buildings, their root groups and the Bruhat and Iwasawa decomposition  in Section~\ref{sec-prelim}. In Section~\ref{sec-retractions}, we introduce a retraction in an affine building and give its interpretation in terms of subgroups of $G$. We include the definitions of the root operators in Section~\ref{sec-operators}. The main results of the paper are then proven in Section~\ref{sec-main}. 

As the anonymous referee kindly pointed out, the results in this paper should also hold for mazures, which are the analogs of affine buildings in the Kac-Moody setting. I have not carried out the generalization at this point, but agree that this seems doable and should be done.   

\subsection*{Acknowledgements} I would like to thank Jacinta Torres for helpful comments on an earlier draft. 


\section{preliminaries}\label{sec-prelim}

In this section we quickly recall the definition and some basic properties of buildings. In particular root groups and retractions are introduced here. For more details please refer to standard textbooks such as \cite{Brown} or \cite{Ronan}. 

\subsection{Buildings and root systems}\label{sec:root systems}

We start with a formal definition of buildings as (abstract) simplicial complexes. 

\begin{definition}
A \emph{building} $X$ is a simplicial complex which is the union of subcomplexes, called \emph{apartments}, such that each apartment is isomorphic to some (geometric realization of) a Coxeter complex $\Sigma$ and the following two axioms are satisfied:
\begin{itemize}
\item[(B1)] For any two simplices there exists an apartment containing both.
\item[(B2)] If A and A' are two apartments containing simplices $\sigma$ and $\tau$, then there exists an isomorphism $A\to A'$ fixing their intersection pointwise.
\end{itemize}
The set of all apartments is called an \emph{atlas} $\App$ of $X$.  
\end{definition}

It is easy to see that for any building all apartments in $\App$ are pairwise isomorphic and hence the type of the associated Coxeter group $W$ acting on the apartment is the same for all $A\in\App$. We refer to this as the type of the building.  Buildings are chamber complexes, i.e. connected simplicial complexes, where the maximal simplices are all of the same, finite dimension. 
A building is \emph{thick} if each  co-dimension one face of a maximal simplex is the face of  at least three chambers. Thickness is not a strong condition as every (potentially non-thick) building has a canonical thickening (compare \cite{Scharlau} and  \cite[Sec 3.7]{KleinerLeeb}) and all buildings of higher rank are automatically thick.  

Affine Coxeter groups $\aW$ admit a canonical splitting as a semi-direct product of the associated spherical Weyl group $\sW$ with the co-root lattice $R^\vee$ of the underlying root system $\RS$, compare for example \cite{Bourbaki4-6}. 

For a fixed apartment $A$ in a building $X$ of type $\aW=\sW\ltimes \RS^\vee$ we identify $A$ with the $\R$-span of the simple co-roots in $\RS^\vee$.
We enumerate the simple roots by  $\alpha_1, \ldots, \alpha_n$ and denote the corresponding co-roots by $\alpha_1^\vee, \ldots, \alpha_n^\vee$. 

The group $\aW$ is a reflection group of the geometric realization $\geomSigma$ of its affine Coxeter complex equipped  with the usual euclidean metric. The subgroup $\sW$ can be identified with the stabilizer of the origin $v_0$ and the images of $v_0$ under the translations $t_\mu$, with $\mu \in \RS^\vee$, are the vertices of the same type as $v_0$.  

For every pair of a root $\alpha$ and an integer $m\in\Z$, there is a wall $H_{\alpha,m}=\{x\in A\;\vert \langle x,\alpha^\vee\rangle=m\}$ in $A$, which determines a positive (with respect to $\alpha$)  half-space $H_{\alpha,m}^+=\{x\in A\;\vert \langle x,\alpha^\vee\rangle \geq m\}$ and a negative half-space $H_{\alpha,m}^-=\{x\in A\;\vert \langle x,\alpha^\vee\rangle \leq m\}$.  If we restrict to positive roots the set of hyperplanes is in bijection with with the pairs of roots and elements of $\Z$, that is with $\RS^+\times\Z$. 

The closures of the connected components of $A\setminus \bigcup_{m\in\Z, \alpha\in\RS^+}  H_{\alpha, m}$ are called \emph{alcoves}  and are in one-to-one correspondence with the elements of the affine Weyl group $\aW$.
 They are the maximal simplices in the simplicial structure of $\Sigma$. The $0$-simplices will be called \emph{vertices} and the codimension one simplices will be referred to as  \emph{panels}.    
 The \emph{fundamental alcove} in $\geomSigma$ is the set 
 \[
 \fa=\{x\in \geomSigma\;\vert\; 0\leq \langle x,\alpha\rangle\leq 1,  \;\forall  \alpha\in\RS^{+}\}
 \]
and corresponds to the identity. Here we choose as a generating set of $\aW$ the set of reflections $\tilde S=\{s_0, s_1, \ldots, s_n\}$, where $s_i$ is the reflection on the wall $H_{\alpha_i, 0}$ for all $i\neq 0$ and where we put $s_0$ to be the reflection on the wall of index one perpendicular to the highest root $\alpha$ in $\RS$. With this setup the set $S=\{s_1, \ldots, s_n\}$ generates the associated spherical Weyl group $\sW$. 

We label the panels in $\Sigma$ with the generators $\{s_0, s_1, \ldots, s_n\}$ in such a way that the labeling is invariant under the $\aW$-action and such that panel of the fundamental alcove which is fixed by $s_i$ has label $s_i$. 
 
As we can identify all apartments $A$ with $\Sigma$ we may speak about alcoves, hyperplanes and labeled panels, etc in all apartments of the building.  

The chambers of the spherical building $\bound X$ at infinity of $X$ are the parallel classes of Weyl chambers in $X$. The map that sends an apartment $A$ of $X$ to the union $\bound A$ of all parallel classes of Weyl chambers in $A$ is a bijection between the set of apartments in $X$ and the set of apartments in the spherical building $\bound X$ at infinity. 
To more easily distinguish them from the alcoves in $X$ we call the maximal simplices in $\bound X$  \emph{chambers}. 

\subsection{Root groups}

The root group datum of a  (semisimple) algebraic group $G$ together with a valuation of this datum fully determines the associated Bruhat-Tits building $X$. Here a crucial role is played by the root groups of $\partial X$. Moreover $G$ admits Cartan, Bruhat  and Iwasawa decompositions, which can be stated in geometric terms as properties of the associated building. 
We now recall these decompositions and  introduce roots groups and valuations of root data. For further details refer the reader to Section 11 of \cite{AB}, Section 6,7 and in particular 7.3 of \cite{BruhatTits} or Chapters 3 and 13  of \cite{AffineWeiss}. 
We suppose that the buildings considered are irreducible. 

Let in the following $G$ be an algebraic group over a field $F$ with a discrete valuation $\nu$. Suppose $G$ has an affine Tits system (or BN-pair) $(B,N)$ with an associated irreducible Bruhat-Tits building $X$. We fix an apartment $A$ in $X$ together with an origin $v_0\in A$ and write $\bound A$ for the spherical apartment that is  the boundary of $A$ at infinity. We will write $\fa$ for the fundamental alcove in $A$ (which contains $v_0$ as a vertex) and denote by $\Cf$ be the fundamental Weyl chamber, that is the unique Weyl chamber based at $v_0$ containing the fundamental alcove $\fa$. 

The group $N$ of the Tits system then is the stabilizer in $G$ of the apartment $A$ and the group $B$ is the subgroup of $G$ stabilizing the fundamental alcove $\fa$. Note that $B$ is often denoted by $I$ and referred to as the Iwahori subgroup of $G$. The spherical Weyl group $\sW$ equals the stabilizer $\stab_\aW(v_0)$ of the origin in the affine Weyl group $\aW$, while $K$ is the stabilizer of $v_0$ in $G$. In case that $F$ is locally compact the group $K$ is a maximal compact subgroup of $G$. We write $T$ for the sub-group of translations in $\aW$ and $T_\Cf$ for the translations $t$ in $\aW$ with  $t v_0\in\Cf$ and $U$ for the stabilizer in $G$ of the chamber $\bound \Cf$ at infinity.

\begin{prop}\label{prop:decomp}
With $\Cf, \fa$ and $v_0$ as above and  subgroups $K=\stab_G(v_0)$, $U=\stab_G(\bound \Cf)$ and $B=\stab_G(\fa)$ in $G$, the group $G$ decoposes as follows:  
\begin{enumerate}
\item\label{Bruhat} Bruhat decomposition \[G= \bigsqcup_{w\in\aW} BwB.\]   
\item\label{Iwasawa} Iwasawa decomposition \[G= \bigsqcup_{t\in T} UtK,\] 
\item\label{Cartan} Cartan decomposition \[G= \bigsqcup_{t\in T_\Cf}Kt K, \] 
\end{enumerate}
\end{prop}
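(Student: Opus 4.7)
The plan is to prove each of the three decompositions by interpreting the coset space $G/H$ (for $H \in \{B, K, U\}$) as a set of geometric objects inside $X$ on which the relevant subgroup acts with understood orbits. The Bruhat decomposition is in fact part of the setup: it is one of the defining axioms of a Tits system $(B,N)$ that $G = \bigsqcup_{w\in W}BwB$ with $W = N/(B\cap N)$. Under the identifications $N = \stab_G(A)$ and $B = \stab_G(\fa)$, the kernel $B\cap N$ is the pointwise fixator of $A$ in $N$ and $W$ is the affine Weyl group $\aW$ acting on $A$, which gives the stated form.

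For the Iwasawa decomposition, the geometric input is that $U = \stab_G(\bound \Cf)$ acts transitively on the set of apartments of $X$ whose boundary at infinity contains $\bound \Cf$. Given $g \in G$, axiom (B1) produces an apartment $A'$ containing both $gv_0$ and a Weyl chamber based at $gv_0$ parallel to $\Cf$, so that $\bound \Cf \subset \bound A'$. Transitivity then yields $u \in U$ with $uA' = A$, and hence $ugv_0$ is a vertex of $A$ of the same type as $v_0$, equal to $tv_0$ for a unique $t \in T$. Since $t^{-1}ug$ fixes $v_0$, we get $g \in u^{-1}tK \subset UtK$. Disjointness, that is $UtK \cap Ut'K = \emptyset$ whenever $tv_0 \neq t'v_0$, is a standard consequence of the root group structure, reflecting the fact that no nontrivial element of $U$ can move one special vertex of $A$ to a distinct one.

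For the Cartan decomposition, the geometric input is that $K = \stab_G(v_0)$ acts transitively on apartments containing $v_0$. Given $g \in G$, axiom (B1) produces an apartment $A'$ containing both $v_0$ and $gv_0$; transitivity yields $k \in K$ with $kA' = A$, so that $kgv_0 \in A$ is a vertex of the same type as $v_0$ and hence equals $t_0 v_0$ for some $t_0 \in T$. Since $\sW \subset K$ and $\Cf$ is a fundamental domain for the $\sW$-action on $A$, a suitable $w \in \sW$ moves $t_0 v_0$ into $\Cf$, producing $t \in T_\Cf$ with $g \in KtK$. Disjointness of $KtK$ for distinct $t \in T_\Cf$ follows because distinct points $tv_0$ inside the closed fundamental Weyl chamber $\Cf$ lie in distinct $K$-orbits on the vertex set of $X$.

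The main obstacle is the two transitivity statements used above: each is an instance of strong transitivity of the $G$-action on $X$ and depends on the root group datum together with the refined structure of an affine BN-pair, not merely on the bare building axioms. Since this preliminary section only collects material, I would not reprove these but cite \cite[Ch.\ 11]{AB} and \cite[Sec.\ 7.3]{BruhatTits}.
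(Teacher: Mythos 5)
Your proposal is correct in substance, but it takes a genuinely different route from the paper: the paper offers no argument at all for this proposition and simply points to (4.4.3) of Bruhat--Tits and Lemma 5.1 of Ronan, whereas you sketch the standard geometric proof, reducing each decomposition to a transitivity statement (of $B$, $U$, resp.\ $K$ on suitable families of apartments) plus the identification of vertices of type $v_0$ in $A$ with the $T$-orbit of $v_0$. This is a reasonable trade-off: the paper's citation is the cheapest option for a preliminaries section, while your sketch makes visible exactly which geometric inputs are needed --- indeed the same transitivity facts reappear in the paper's Remark on Lemma~\ref{lem:union}, where the logic runs in the opposite direction (the decompositions are used to deduce the transitivity), so your argument and the paper's later remark are essentially two readings of the same equivalence. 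Two small imprecisions in your write-up are worth flagging, though neither is a genuine gap given that you defer to the references for the hard transitivity statements: the Bruhat decomposition is a theorem about Tits systems (a short consequence of the axioms $G=\langle B,N\rangle$ and $sBw\subset BwB\cup BswB$, with disjointness requiring a separate induction), not literally one of the axioms; and the existence of an apartment containing a given vertex together with a Weyl chamber in the class $\bound\Cf$ does not follow from axiom (B1), which concerns two simplices of $X$ --- it is the separate statement recorded as Lemma~\ref{lem:union}\eqref{CUnion} in the paper, so you should cite that (or the corresponding axiom for buildings with their structure at infinity) rather than (B1) at that step.
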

For proofs see (4.4.3) of \cite{BruhatTits} and Lemma 5.1 in \cite{Ronan}.

With notation as above the decompositions in Proposition~\ref{prop:decomp} translate to the following geometric statements. 

\begin{prop}\label{prop:union}
Let $X$ be a (thick) affine building, $c$ an alcove in $X$ and $\bound C$ a chamber in $\bound X$. Then 
\begin{enumerate}
\item $X$ is the union of all apartments containing $c$. 
\item\label{} $X$ is the union of all apartments $A$ such that $\bound \Cf\subset \bound A$. 
\end{enumerate}
\end{prop}

In the Bruhat-Tits case the first item can be deduced from the Bruhat decomposition and the second is a consequence of the Iwasawa decomposition. However, it is not hard to see that one can prove these two statements directly from the definition of affine buildings and their spherical buildings at infinity.

%

\begin{definition} [(products of) root groups] 
Let in the following $\alpha$ be a half-apartment
of $\bound A$ in the spherical building $\bound X$ at infinity of $X$. Define the \emph{root group}  $U_\alpha$ of $\alpha$  to be the subgroup of the full automorphism group of $\bound X$ that fixes every chamber having a panel contained in $\alpha\smallsetminus\bound\alpha$.

For a Weyl chamber $C$ in $X$ let $U_C$ be the product of all root groups $U_\alpha$, where $\alpha$ contains $\bound C$.
\end{definition}

Thus the group $U$, introduced right before Proposition~\ref{prop:decomp}, satisfies 
\[
U=U_\Cf=\prod_{\bound C\in\alpha} U_\alpha.
\]  

One can define two odered sets of roots, the \emph{closed}  \emph{interval} $[a,b]=(\alpha_0, \alpha_1, \ldots, \alpha_s)$ and the \emph{open} interval $(\alpha,\beta)=( \alpha_1, \ldots, \alpha_{s-1})$) of roots $\alpha$ and $\beta$ which are not opposite each other. For a precise definition see \cite[Defintion 3.1]{AffineWeiss}. 
As a set the closed interval  $[\alpha, \beta]$ consists of all roots of $\bound A$ containing $\alpha\cap\beta$.  

\begin{prop}\cite[Prop. 3.2]{AffineWeiss}
With notation as above let $[\alpha, \beta]$ be a closed interval of non-opposite roots. Then 
\begin{enumerate}
\item if $s\geq 3$ one has $[U_1,U_s]\subset U_2 U_3\cdots U_{s-1}$, where $U_i$ denotes the root group $U_{\alpha_i}$ for all $i\in\{1,2,\ldots, s\}$. 
\item If $s=2$ one has $[U_1,U_s]=1$. 
\item Every element of $\langle U_1, U_2, \ldots, U_s\rangle$ can be written uniquely as a product of the form $u_1u_2\cdots u_2$ with $u_i\in U_i$ for all $i\in\{1,2,\ldots, s\}$. 
\end{enumerate}
\end{prop}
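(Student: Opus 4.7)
The plan is to reduce the statement to a computation inside a rank-$2$ Moufang polygon and then prove all three items by a simultaneous induction on the length $s$ of the interval. Since $\alpha$ and $\beta$ are non-opposite, the simplex $\sigma := \partial\alpha \cap \partial\beta$ has codimension $2$ in $\partial A$ and is contained in every root of $[\alpha,\beta]$. The residue $\mathrm{Res}_{\partial X}(\sigma)$ is a generalized $m$-gon $\Pi$, and the restrictions of $\alpha_1, \ldots, \alpha_s$ to this residue form a cyclic sub-interval of the roots of $\Pi$. Because every $U_{\alpha_i}$ fixes each chamber of $\partial X$ that does not touch $\sigma$, the action of $\langle U_{\alpha_1},\ldots,U_{\alpha_s}\rangle$ is determined by its action on $\Pi$, and it suffices to verify all three assertions there.

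Inside $\Pi$, I fix an apartment $\Sigma_0$ containing all of the $\alpha_i$ and label the chambers of $\Sigma_0$ cyclically $C_0, C_1, \ldots$, chosen so that the boundary wall of $\alpha_i$ crosses the panel between $C_{i-1}$ and $C_i$. For items (1) and (2) one tracks the commutator $[u_1,u_s]$ on chambers of $\Sigma_0$: since $u_1 \in U_{\alpha_1}$ fixes every chamber of $\Sigma_0$ lying in $\alpha_1 \setminus \partial\alpha_1$ and $u_s \in U_{\alpha_s}$ fixes every chamber in $\alpha_s \setminus \partial\alpha_s$, the commutator $[u_1,u_s]$ preserves $\Sigma_0$ and acts trivially on the chambers outside a short arc between the walls $\partial\alpha_1$ and $\partial\alpha_s$. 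The standard Moufang-polygon gallery computation (as in \cite[Sec.~3]{AffineWeiss}) then identifies $[u_1,u_s]$ with an ordered product $v_2 v_3 \cdots v_{s-1}$ with $v_i \in U_{\alpha_i}$; when $s=2$ this product is empty, forcing $[u_1,u_2] = 1$.

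For item (3), uniqueness is proved first by induction on $s$: if $u_1 \cdots u_s = u'_1 \cdots u'_s$, I choose a chamber $D$ touching the wall $\partial\alpha_1$ on which $U_{\alpha_1}$ acts simply transitively and on which $U_{\alpha_2}, \ldots, U_{\alpha_s}$ act trivially (such a $D$ exists because $D$ can be taken on the side of $\alpha_1$ opposite to $\sigma$, where the higher-indexed root groups fix panels by the defining fixed-chamber condition, combined with thickness). Evaluating both sides at $D$ yields $u_1 D = u'_1 D$, hence $u_1 = u'_1$; cancelling reduces to the interval $[\alpha_2,\alpha_s]$. Existence follows by a normal-form rewriting: any word in the generators can be brought into canonical order by using (1) to replace adjacent letters $u_j u_i$ with $j>i$ by $u_i u_j \cdot w$ for some $w \in U_{\alpha_{i+1}} \cdots U_{\alpha_{j-1}}$, and a lexicographic weight on the multiset of subscripts strictly decreases at each step, so the rewriting terminates.

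The main obstacle is the interlock between (1) and (3): item (1) by itself only yields the weaker containment $[U_1,U_s] \subset \langle U_2, \ldots, U_{s-1}\rangle$, whereas the stated ordered-product containment already presupposes the existence/uniqueness of a normal form for the shorter interval $[\alpha_2,\alpha_{s-1}]$. Organising the induction so that (1), (2) and (3) are proved together on all intervals of length less than $s$, and exhibiting an auxiliary chamber $D$ in the uniqueness step that is genuinely fixed by $U_{\alpha_2}, \ldots, U_{\alpha_s}$, are the technical hearts of the argument.
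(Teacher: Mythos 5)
This proposition is quoted by the paper directly from \cite[Prop.~3.2]{AffineWeiss}; the paper contains no proof of its own, so there is nothing internal to compare your argument against. Your outline does follow the standard textbook route (pass to the rank-two residue determined by the interval, do the commutator calculus in the resulting Moufang polygon, and get the normal form from simple transitivity of $U_{\alpha_1}$ on a terminal panel fixed by the remaining root groups). The uniqueness step in (3) is essentially correct as you describe it. But two steps, as written, do not hold up.

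First, the reduction to the residue rests on the assertion that ``every $U_{\alpha_i}$ fixes each chamber of $\partial X$ that does not touch $\sigma$.'' That is false: by definition $U_{\alpha_i}$ fixes the chambers having a panel in $\alpha_i\setminus\partial\alpha_i$, and it genuinely moves chambers lying deep in $-\alpha_i$, most of which are nowhere near $\sigma$. What the reduction actually requires is that the restriction homomorphism $\langle U_{\alpha_1},\dots,U_{\alpha_s}\rangle\to\mathrm{Aut}(\mathrm{Res}(\sigma))$ be injective, and that is a consequence of the rigidity theorem for thick spherical buildings (an automorphism fixing a chamber $c$, all chambers sharing a panel with $c$, and an apartment through $c$ is the identity), applied to a chamber in the interior of $\alpha\cap\beta$ that all the $U_{\alpha_i}$ fix together with its panel-neighbourhood; without some such argument, computing in $\Pi$ tells you nothing about the group acting on $\partial X$. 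Second, the identification of $[u_1,u_s]$ with an ordered product $v_2\cdots v_{s-1}$ inside the polygon \emph{is} the rank-two case of item (1), and you dispose of it by citing ``the standard Moufang-polygon gallery computation''; as a proof of the proposition this is circular. Relatedly, your termination argument for the existence of the normal form is not sound as stated: the rewrite $u_ju_i\mapsto u_iu_j\,u_{i+1}\cdots u_{j-1}$ strictly \emph{enlarges} the multiset of subscripts, and lexicographic descent on words of unbounded length is not a well-founded order, so it does not by itself force termination. The usual repair is an induction on $s$: first show $U_2\cdots U_s$ is a group for the shorter interval, then show $U_2\cdots U_s\,U_1\subseteq U_1U_2\cdots U_s$ using the commutator relations, which makes $U_1\cdots U_s$ closed under multiplication and inversion.
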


There is a natural one-to-one correspondence between the half-apartments of $\bound A$ and the elements of the underlying root system $\RS$.  In the following we will once and for all fix such an identification and will index the set of half-apartments in $\bound A$ by elements of $\RS$ and call both of them \emph{roots}. We refer the reader to Definition 3.12 and Proposition 3.14 of \cite{AffineWeiss} where this identification is made precise. 
Compare also Remark 13.16 of \cite{AffineWeiss}.

\begin{definition}
Let $G^\dagger$ be the group generated by all the root groups $U_\alpha$  for all roots $\alpha$ in $\bound A$. Let $\xi$ denote the map from $\RS$ to $G^\dagger$ that assigns to a root $\alpha$ the root group $U_\alpha$. We call the triple  $(G^\dagger, \{U_\alpha\}_{\alpha\in\RS}, \xi)$ \emph{root datum} of $\bound X$ \emph{based at} $\bound A$. 
\end{definition}

In case that $F = \C((t))$ where $\C$ are the complex numbers, $G^\dagger$ is just G. 
It is shown in \cite[3.4 and 29.15.iii]{AffineWeiss} that $G^\dagger$ acts transitively on the set of apartments of $\bound X$ and hence the root datum is unique up to conjugation by an element of $G^\dagger$.  

%

Recall that we had fixed an apartment $A\subset X$ and an origin $v_0\in A$ and the roots $\alpha\in\RS$ are in one to one correspondence with the half-apartments in $A$ determined by walls through the origin. 
Recall that the set of all half-apartments in $A$ is $\left\{ H^\pm_{\alpha, k} \;\vert\;  k\in\Z, \alpha\in\RS^+\right\}$. 
     
\begin{prop}\label{prop:fixed points}
Let $\alpha$ be a root and $u\in U_\alpha^\ast$. Then the fixed point set $a_u\define A\cap A^u$ of $u$ in $A$ is a half-apartment of $A$ with $\bound a_u=\alpha$.  In particular elements of $U_a^\ast$ are special automorphisms of $\bound A$ and for each of them exists $k\in\Z$ such that $a_u=H^+_{\alpha, k}$, the positive half-apartment of index $k$ with respect to $\alpha$.  
\end{prop}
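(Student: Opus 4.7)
The plan is to unpack the definition of $U_\alpha$ in order to extract enough fixing at infinity, then lift that fixing to the affine apartment $A$ via axiom~(B2), and finally match the resulting half-apartment with the actual fixed point set of $u$.

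First I would observe that by definition every $u\in U_\alpha^\ast$ fixes pointwise every chamber of $\bound X$ whose closure meets $\alpha\smallsetminus\bound\alpha$. Because any chamber of $\bound A$ contained in $\alpha$ is gallery-connected through such chambers, simpliciality of $u$ forces $u$ to fix the entire half-apartment $\alpha\subset\bound A$ pointwise. In particular $u$ stabilizes a chamber of $\bound A$, and hence is a \emph{special} automorphism of $\bound A$.

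Next I would promote this boundary-fixing to the interior. Using the bijection between apartments of $X$ and apartments of $\bound X$ recalled in Section~\ref{sec-prelim}, the apartment $u(A)$ has boundary containing $\alpha$. Axiom~(B2) then supplies an isomorphism $\varphi\colon A\to u(A)$ pointwise fixing $A\cap u(A)$. Since both apartments share all Weyl chambers in $X$ that represent the chambers of $\alpha$, a gallery-extension argument using CAT(0) convexity shows that $A\cap u(A)$ must be a half-apartment of $A$ with wall parallel to $H_{\alpha,0}$, and therefore of the form $H^+_{\alpha,k}$ for some $k\in\Z$. A rigidity argument then identifies the restriction $u|_{H^+_{\alpha, k}}$ with $\varphi|_{H^+_{\alpha, k}}$: any automorphism of an apartment that fixes a sub-chamber at infinity is uniquely determined near that sub-chamber, so $u$ fixes $H^+_{\alpha, k}$ pointwise. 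The reverse inclusion is immediate, since any fixed point $x\in A$ of $u$ must lie in $A\cap u(A)=H^+_{\alpha, k}$, and hence $a_u=H^+_{\alpha, k}$ with $\bound a_u=\alpha$.

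The main obstacle I expect is the middle step, matching two apartments that share only a half-apartment at infinity to a single affine half-apartment inside $X$. This is the geometric core of the statement and the place where one genuinely uses the theory of affine buildings rather than formal consequences of the axioms; I would rely on \cite[Chapter~13]{AffineWeiss} and the Bruhat--Tits construction to supply this. Once this piece is in place, the remaining assertions about $a_u$ and about $u$ being special follow essentially formally.
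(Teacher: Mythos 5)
There is a mismatch in kind here: the paper offers no proof of this proposition at all --- it is imported verbatim from the literature (``By Propositions 13.2 and 13.18.ii and Notation 13.17 of \cite{AffineWeiss} we have\dots''), so the honest ``paper proof'' is a citation to exactly the chapter you propose to fall back on. Your sketch is therefore only valuable to the extent that it actually closes the gap, and it does not. The decisive weakness is that your middle step uses only the property ``$u$ fixes the half-apartment $\alpha\subset\bound A$ pointwise at infinity,'' and that property is far too weak to force $A\cap u(A)$ to be a half-apartment of the form $H^+_{\alpha,k}$: any automorphism stabilizing $A$ (for instance a translation of $A$, or the identity) fixes all of $\bound A$ pointwise, yet $A\cap u(A)=A$. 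What you must additionally use is that $u$ is a \emph{nontrivial} element of the root group --- e.g.\ that $U_\alpha$ acts simply transitively on the apartments of $\bound X$ containing $\alpha$, so $u(\bound A)\neq\bound A$ and hence $u(A)\neq A$ --- and then the genuinely hard part is showing that the resulting proper convex subcomplex $A\cap u(A)$, which contains a sector from every parallel class in $\alpha$, is bounded by a single wall parallel to $H_{\alpha,0}$ rather than being some smaller convex set. That is precisely the content of \cite[13.2, 13.18]{AffineWeiss}, and ``a gallery-extension argument using CAT(0) convexity'' is not a proof of it.

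Two further points. Your rigidity claim --- ``any automorphism of an apartment that fixes a sub-chamber at infinity is uniquely determined near that sub-chamber'' --- is false as stated: a translation of $A$ in the direction of a sector fixes the corresponding chamber at infinity but is not the identity anywhere. To identify $u|_{A\cap u(A)}$ with the identity you need $u$ to fix an actual sector of $X$ pointwise (not merely its class at infinity), which is again part of what is being proved. Finally, you silently read $A^u$ as the image $u(A)$; the proposition calls $a_u=A\cap A^u$ the \emph{fixed point set} of $u$ in $A$, and the equality of these two readings is itself one of the assertions of \cite[13.18]{AffineWeiss}, not something you may assume. In short, the skeleton is reasonable, but every load-bearing step is deferred to the same source the paper cites, so nothing new has been established.
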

\proof 
This is a consequence of Propositions 13.2 and 13.18.ii and Notation 13.17 of \cite{AffineWeiss}.
\qed

\begin{definition}[{\cite[3.21]{AffineWeiss}}]
A \emph{valuation of the root datum} $(G^\dagger, \{U_\alpha\}_{\alpha\in\RS}, \xi)$ is a  family of maps $\RS_\alpha:U^\ast_\alpha \to \Z$, where we put for all $\alpha\in\RS$ and all $u\in U_\alpha^\ast$ the map $\RS_\alpha(u)=-k$ with  $k$ as in \ref{prop:fixed points}.
 We let $\RS_\alpha(1)=\infty$ for all $\alpha$ to extend $\RS_\alpha$ to all of $U_\alpha$. 
\end{definition}

Note that $\RS_\alpha$ is in fact surjective for all $\alpha$. 

\begin{definition}[Affine root groups]
For $\alpha\in\RS$ we set 
\[
U_{\alpha, k}\define\{u\in U_\alpha \;\vert\; \RS(a)\geq k \}  
\]
for each $k\in\R$ (or in $\Z$). 
\end{definition}

\begin{remark}
One can show \cite[13.18.iii]{AffineWeiss} that $u\in U_{\alpha, k}^\ast$ if and only if $H^+_{\alpha, -k}\subset a_u$.  
Thus the group  $U_{\alpha, k}^\ast$ can be written as 
\[
U_{\alpha, k}^\ast=\{ u\in U_\alpha \;\vert \; a_u=H^+_{\alpha, l} \text{ with } l\leq - k\}
\]
Moreover, the elements in $U_{\alpha, k}^\ast \setminus U_{\alpha, k-1}^\ast$ are the ones with $a_u=H^+_{\alpha, -k}$ and $\phi_\alpha(u)=k$. 
\end{remark}

\begin{definition}\label{def:m(u)}
Let $\alpha$ be a root. Then, see 3.8 of \cite{AffineWeiss}, for each $u\in U_\alpha^\ast$ there exists a unique element in $U^\ast_{-\alpha}uU^\ast_{-\alpha}$ that maps $\alpha$ to $-\alpha$. We call this element $m(u)$. 
\end{definition}

One can show that 
\[ m(u)^{-1} = m(u^{-1}) \;\text{ for all } u \in U_\alpha^\ast.\] Therefore every $g\in m(U_\alpha^\ast)$ induces the reflection $s_\alpha$ on $\bound A$ which interchanges $\alpha$ and $-\alpha$. 
For elements $u \in U_{\alpha, k}^\ast \setminus U_{\alpha, k-1}^\ast$ we get that $m(u)$ induces the reflection $s_{\alpha, -k}$ on $A$ which switches the half spaces $H^+_{\alpha, -k}$ and $H^-_{\alpha, -k}$.


\subsection{Retractions}\label{sec-retractions}

In this subsection we will recall the definition of the retraction ``from infinity'' in an affine building onto a fixed apartment and explain their connection to the Bruhat and Iwasawa decomposition. 

Proposition~\ref{prop:union} allows us to define a retraction of $X$ onto a fixed apartment which depends on the choice of a chamber in $\bound A$.  There is a second type of retraction given by a choice of an alcove in $A$ which we will not need in this paper and hence won't introduce. 

\begin{definition}\label{retraction infty}
Let $\bound C$ be a chamber in a fixed apartment $\bound A$ in $\bound X$.  For every alcove $d$ in $X$ choose $A'\in\App$ such that $d\in A'$ and $\bound C\subset \bound A'$ and define 
$$
\rho_{A, C}(d)\define \varphi(d),
$$ 
where $\varphi:A'\to A$ is the unique isomorphism mapping $A'$ to $A$ and fixing their intersection. We call $\rho_{A,  C}$ the \emph{retraction (from infinity) onto $A$ based at $\bound C$}. 
\end{definition}

From Proposition~\ref{prop:union} and the definition of buildings one easily deduces that these retractions are well defined and independent of the choice of an apartment $A'$. 

The retractions $\rho_{A, C}$ have a natural group theoretic interpretation, which we will state and prove below. 
Compare also Proposition 1 of \cite{GaussentLittelmann}. 
   
\begin{prop}
The fibers of $\rho_{A,  C}: X\to A$ are the $U_C$ orbits on $X$.  
\end{prop}
\begin{proof}
Recall that for an arbitrary Weyl chamber $C$ the group  $U_C$ is the product of all root groups containing $\bound C$ at infinity. 
The action of $U_C$ on the set of all apartments containing a sub-Weyl chamber of $C$ is transitive. Moreover for each $u\in U_C$ the restriction of $u$ to $uA$ is the unique isomorphism mapping $uA$ to $A$ and fixing their intersection pointwise. Therefore for all points $x'$ in the preimage $\rho_{A, C}^{-1}(x)$,  for some $x\in A$, there exists $u\in U_C$ with $x'=ux$. We thus have the assertion.   
\end{proof}


\section{Definition of root  operators}\label{sec-operators}

We now recall the definition of the operators $e_\alpha$, $f_\alpha$ and $\tilde e_\alpha$ for simple roots $\alpha$ in a fixed apartment $A$ as introduced in \cite{GaussentLittelmann}. They take as an input a combinatorial gallery. 

\begin{definition}
A \emph{combinatorial gallery} $\gamma$ is a sequence of simplices in $X$, 
\[
\gamma=\left(p_0\subset c_0\supset p_1\subset \cdots c_l\supset p_{l+1} \right),
\] 
where $p_0, p_{l+1}$ are vertices in $X$, the $c_i$ are simplices of dimension $\geq 1$ and the $p_i$, $i\neq 0,l+1$, are faces of both $c_{i-1}$ and $c_i$ of positive codimension.    

Every panel $p_i$ in $\gamma$ is labeled by some $s_{j_i}\in S$ and we call the product $w=s_{j_1}s_{j_2}\cdots s_{j_n}$ the type of the gallery. 
\end{definition}

A combinatorial gallery where all $c_i$ are alcoves is an ordinary gallery of alcoves where in addition a start and end vertex as well as for each pair of consecutive alcoves a shared codimension one face is specified. 


\begin{notation}\label{not:operators}
Let $\gamma=\left(p_0\subset c_0\supset \cdots c_l\supset p_{l+1} \right)$ 
be a combinatorial gallery of type $\gamma_\lambda$ that starts in the origin and ends in a co-character $\nu\preceq\lambda$, i.e. $\gamma\in\Gamma(\gamma_\lambda, \nu)$.   Let $\alpha$ be a simple root, and define $m\in\Z$ to be minimal such that there exists $q$ with $p_q$ contained in the hyperplane $H_{\alpha, m}$. Note that $m\leq 0$ as $p_0$ is the origin. 
 \noindent
There are the following cases:
\begin{itemize}
\item[(I)] $m\leq -1$. In this case let  $k$ be minimal with $p_k\subset H_{\alpha, m}$, and let $0\leq j\leq k$ be maximal with $p_j\subset H_{\alpha, m+1}$. 
\item[(II)] $m\leq \langle \nu, \alpha \rangle - 1$. In this case let $j$ be maximal with $p_j\subset H_{\alpha, m}$, and let $j\leq k\leq l+1$ be minimal  with $p_k\subset H_{\alpha, m+1}$.
\item[(III)] $\gamma$ crosses $H_{\alpha, m}$. In this case fix $j$ minimal such that $p_j\subset H_{\alpha, m}$ and $H_{\alpha, m}$ separates $c_i$ from $\Cf$ for all $i<j$. Let $k>j$ be maximal such that $p_k\subset H_{\alpha, m}$.      
\end{itemize}
\end{notation}
\noindent
Observe that cases (I) -- (III) are not disjoint.

\begin{definition}\label{def:GLoperators}
With notation as in \ref{not:operators}, we define \emph{root operators} $e_\alpha$,  $f_\alpha$ and $\tilde e_\alpha$ as follows:
\begin{itemize}
\item In case (I) let $e_\alpha(\gamma)$ be the combinatorial gallery defined by
\[ e_\alpha(\gamma)=(\nu=p_0\subset c'_0\supset p'_1 \subset c'_1 \supset p'_1 \subset \ldots \subset c'_l \supset p'_{l+1}=\lambda ), \]
where 
\[ c'_i = 
\left\{ \begin{array}{ll}
	  c_i & \text{for } i< j-1, \\
	  s_{\alpha, m+1}(c_i) & \text{for } j\leq i< k, \\
	  t_{\alpha^\vee}(c_i) & \text{for } i\geq k. \\
        \end{array}
\right .
\]

\item  
In case (II) let $f_\alpha(\gamma)$ be the combinatorial gallery defined by
\[f_\alpha(\gamma)=(\nu=p_0\subset c'_0\supset p'_1 \subset c'_1 \supset p'_1 \subset \ldots \subset c'_l \supset p'_{l+1}=\lambda ), \]
where 
\[ c'_i = 
\left\{ \begin{array}{ll}
	  c_i & \text{for } i< j, \\
	  s_{\alpha, m+1}(c_i) & \text{for } j\leq i< k, \\
	  t_{-\alpha^\vee}(c_i) & \text{for } i\geq k. \\
        \end{array}
\right .
\]

\item In case (III) let $\tilde e_\alpha$ be the combinatorial gallery defined by 
\[\tilde e_\alpha(\gamma)=(\nu=p_0\subset c'_0\supset p'_1 \subset c'_1 \supset p'_1 \subset \ldots \subset c'_l \supset p'_{l+1}=\lambda ), \]
where 
\[ c'_i = 
\left\{ \begin{array}{ll}
	  c_i & \text{for } i\leq j-1 \text{ and } i\geq k, \\
	  s_{\alpha, m+1}(c_i) & \text{for } j\leq i< k.
	  \end{array}
\right .
\]
 
\end{itemize} 
\end{definition}

The $e$ and $f$ operators are partial inverses of one another and have many nice combinatorial properties listed in Lemma 6 and 7 of \cite{GaussentLittelmann}. 

\section{Expressing root operators in terms of retractions and root group elements} \label{sec-main}

In this section we show that one can express root operators in terms of retractions and with the help of root group elements.  We comment on the path--version of the root operators in Subsection~\ref{sec:paths}.   

\subsection{Expressing root operators in terms of retractions}

As always let $X$ be a thick affine building, $A$ a fixed apartment in $X$ and $\alpha$ a simple root. 
For every $k\in\Z$ choose an apartment $A_k\in\App$  such that 
\[
A\cap A_k=H_{\alpha,k}^-
\]
and write $\rho_k$ for the restriction of $\rho_{A,\Cfm}:X\to A$ to the apartment $A_k$. 
Note that $\bound\Cfm\subset\bound A_k$. It is easy to see that $\rho_k$ is an isometry that fixes the intersection $A_k\cap A$. 
Similarly write $\rho_{k}^{op}$ for the restriction of $\rho_{A,\Cf}$ to the apartment 
\[
B_{k}\define (A_{k}\setminus A) \cup (A\setminus A_{k}).
\] 

\begin{figure}[h]
\begin{overpic}[width=0.75\textwidth]{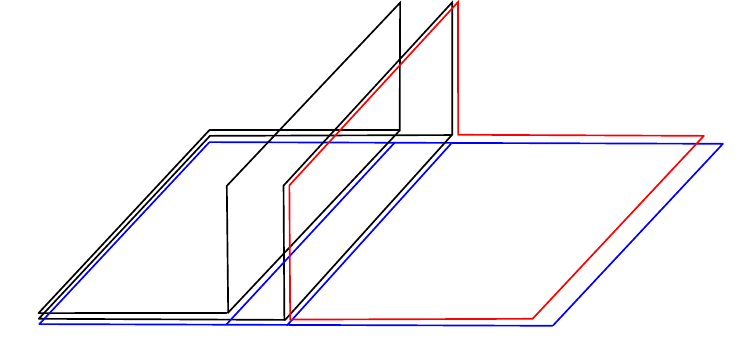}
\put(38,37){$A_{k} $}
\put(48,37){$A_{k+1} $}
\put(62,37){\color{red}$B_{k+1} $}
\put(25,-2){$H_{\alpha, k}$ }
\put(37,-2){$H_{\alpha, k+1}$ }
\put(18,10){$H^-_{\alpha, k}$ }
\put(54,10){$H^+_{\alpha, k+1}$ }
\end{overpic}
\caption[retractions]{The intersection of the apartments $A_k$ and $B_k$ with $A$ are  complementing half-spaces.}
\label{fig:half-spaces}
\end{figure}

We define maps between pairs of apartments $B_k$ and $B_l$.

\begin{definition}
With notation as above put
\[
\iota_{k,l, A}\define (\rho_k)^{-1}\circ\rho_l. 
 \text{\; and  let }
\iota_{k,l, B}\define (\rho^{op}_k)^{-1}\circ\rho^{op}_l 
\]
\end{definition}

\begin{lemma}
The map $\iota_{k,l, B}:B_l \to B_k$ is an isometry from $B_k$ to $B_l$  fixing $B_l \cap B_k$ pointwise. 
Similarly, the map $\iota_{k,l, A}: A_l \to A_k$ is an isometry from $A_l$ to $A_k$ fixing $A_l \cap A_k$ pointwise.
\end{lemma}
\begin{proof}
The restriction of $\rho_{A,\Cf}$, respectively $\rho_{A,\Cfm}$, to an apartment $A'$ containing a sub-Weyl chamber of $\Cf$, respectively $\Cfm$, is an isometry between $A'$ and $A$  that fixes their intersection pointwise. Hence the lemma. 
\end{proof}

The next lemma shows how reflections in an apartment are linked with retractions. 

\begin{lemma}\label{lem:retraction reflection}
For any alcove $c\in A_{k}\cap B_{k}$ its two retracted images $d=\rho_{k}(c)$ and $d^{op}=\rho_{k}^{op}(c)$ are reflected images of one another along $H_{\alpha, k}$.  That is $d^{op}=s_{\alpha,k}(d)=m(u)(d)$ for all $u\in U_\alpha^\star  $. 
\end{lemma}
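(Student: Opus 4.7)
The plan is to show that both retractions, when restricted to the ``positive'' half of $A_k$ where $c$ lies, are isometries into opposite halves of $A$ that coincide on the wall $H_{\alpha,k}$; their composition is then forced to be the reflection $s_{\alpha,k}$. I would begin by making the geometric picture precise. Since $A \cap A_k = H^-_{\alpha,k}$ and $B_k = (A_k \setminus A) \cup (A \setminus A_k)$ is built by swapping the two negative halves along the wall, one has $A \cap B_k = H^+_{\alpha,k}$ and $A_k \cap B_k$ is precisely the closed positive half of $A_k$, namely $(A_k \setminus A) \cup H_{\alpha,k}$. In particular the wall $H_{\alpha,k}$ lies in both $A_k \cap A$ and $B_k \cap A$. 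Because $\alpha$ is a simple (hence positive) root, $H^-_{\alpha,k}$ contains a subsector of $\Cfm$ and $H^+_{\alpha,k}$ contains one of $\Cf$, so $\bound A_k \supset \bound \Cfm$ and $\bound B_k \supset \bound \Cf$; item~\eqref{CUnion} of Lemma~\ref{lem:union} then ensures that $\rho_k$ and $\rho_k^{op}$ are well-defined isometries on $A_k$ and $B_k$, respectively.

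Next I would exploit the defining property of each retraction as the unique apartment-isomorphism onto $A$ fixing the overlap with $A$ pointwise. The map $\rho_k: A_k \to A$ fixes $H^-_{\alpha,k}$ and hence sends the positive half of $A_k$ bijectively onto $H^+_{\alpha,k}$; in particular $d = \rho_k(c) \in H^+_{\alpha,k}$. Dually, $\rho_k^{op}: B_k \to A$ fixes $H^+_{\alpha,k}$ and sends the positive half of $A_k$ bijectively onto $H^-_{\alpha,k}$, so $d^{op} = \rho_k^{op}(c) \in H^-_{\alpha,k}$. Both restrictions fix the wall $H_{\alpha,k}$ pointwise, so $\rho_k^{op} \circ \rho_k^{-1}$ restricts to an isometry $H^+_{\alpha,k} \to H^-_{\alpha,k}$ which is the identity on the separating wall. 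The only such isometry is the restriction of the reflection $s_{\alpha,k}$, and this forces $d^{op} = s_{\alpha,k}(d)$.

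For the identification with $m(u)$ I would invoke the observation recorded immediately after Definition~\ref{def:m(u)}: for any $u$ in the appropriate stratum of the filtration $U^\ast_{\alpha,\cdot}$ (chosen so that $m(u)$ induces $s_{\alpha,k}$ on $A$ rather than $s_{\alpha,-k}$), the element $m(u)$ acts on $A$ precisely as $s_{\alpha,k}$; applying this to $d$ gives $m(u)(d) = s_{\alpha,k}(d) = d^{op}$. The main technical hurdle I expect is the half-space bookkeeping in the first paragraph, in particular verifying that $B_k$ really is an apartment, meets $A$ in $H^+_{\alpha,k}$, and has $\bound \Cf$ in its boundary; once that is set up, the reflection conclusion is forced by the uniqueness of apartment-fixing isomorphisms built into the definitions of $\rho_k$ and $\rho_k^{op}$.
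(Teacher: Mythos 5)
Your proof is correct and follows essentially the same route as the paper's: restrict each retraction to the relevant apartment to get an isometry fixing its intersection with $A$ pointwise, compose to obtain an isometry between the two half-spaces of $A$ bounded by $H_{\alpha,k}$ that fixes the wall, and conclude it must be $s_{\alpha,k}$. Your write-up is in fact somewhat more careful than the paper's (which does not spell out the half-space bookkeeping for $B_k$ nor the $m(u)$ clause), but the underlying argument is the same.
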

\begin{proof}
Suppose $C$ is a Weyl chamber in $A$ and $B\in\App$ such that  $\bound B$ contains  $\bound C$. Then the restriction of  $\rho_{A,C}$ to $B$ is an isometry of apartments fixing $A\cap B$ pointwise. Therefore the half space $B_{k}\setminus A$ is  isometrically mapped onto $A\setminus B_{k}$ by $\rho_{k}^{op}$. The same statement holds for the half-space $A_{k}\setminus A$ and $A\setminus A_{k}$ and the retraction $\rho_{k}$. 
Combining the two we obtain an isometry $A\setminus A_{k}\to A\setminus B_{k}$ that fixes the common wall. This isometry has to be the reflection along the wall $H_{\alpha, k}$. 
\end{proof}


We introduce some notation and state our main result afterwards. 

\begin{notation}[Writing galleries as concatenations of sub-galleries]
Any combinatorial gallery $\gamma=(f_0\subset c_1\supset f_1\subset\ldots \subset c_n\supset f_n)$ can be split into two sub-galleries 
$$
\gamma_i^-=(f_0\subset c_1\supset \ldots \subset c_{i-1}\supset f_i)
$$
and 
$$
\gamma_i^+=(f_i\subset c_k\supset \ldots \subset c_n\supset f_n), 
$$
for all $0\leq i\leq n$. With this $\gamma=\gamma_i^-\star\gamma_i^+$, the concatenation of $\gamma_i^-$ and $\gamma_i^+$.  
\end{notation}

\begin{thm}\label{thm:ef}
Let $\Cfm$ be the unique Weyl chamber in $A$ opposite $\Cf$. Let $\gamma$  be a combinatorial gallery and let $k\geq 1$, $m\leq -1$ and $j$ be as in \ref{not:operators}. 

Suppose that $e_\alpha$ is defined for $\gamma$ and decompose $\gamma$ as $\gamma=\gamma_k^-\star\gamma_k^+$. Then 
$$
e_\alpha(\gamma)=\rho_{A, \Cfm}\left( \iota_{m+1, m, B}\left(\gamma_k^-\star\rho_m^{-1}(\gamma_k^+)\right)\right).
$$

Suppose $f_\alpha$ is defined for $\gamma$ and decompose $\gamma$ as $\gamma=\gamma_j^-\star\gamma_j^+$. Then 
$$
f_\alpha(\gamma)=\rho_{A, \Cfm}\left( \iota_{m-1, m, B}\left(\gamma_j^-\star\rho_m^{-1}(\gamma_j^+)\right)\right).
$$
\end{thm}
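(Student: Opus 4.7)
The plan is to verify the identity alcove by alcove. As a first step, I would check that the intermediate object $\gamma_k^-\star\rho_m^{-1}(\gamma_k^+)$ is a well-defined combinatorial gallery contained in the apartment $B_m$. By minimality of $m$ the whole of $\gamma$ sits in $H_{\alpha,m}^+$, so $\gamma_k^-\subset H_{\alpha,m}^+\subset B_m\cap A$ and $\rho_m^{-1}(\gamma_k^+)\subset A_m\setminus A\subset B_m$ (since $\rho_m$ maps $A_m\setminus A$ bijectively onto $H_{\alpha,m}^+$). The two halves glue continuously at the vertex $f_k\subset H_{\alpha,m}$, which is fixed by $\rho_m^{-1}$ because $H_{\alpha,m}\subset A\cap A_m$.

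Next I would decode the composition $\Psi\define\rho_{A,\Cfm}\circ\iota_{m+1,m,B}$ on $B_m$. Writing $\iota_{m+1,m,B}=(\rho_{m+1}^{op})^{-1}\circ\rho_m^{op}$ and repeatedly invoking Lemma~\ref{lem:retraction reflection} (which identifies $\rho_k^{op}(c)=s_{\alpha,k}(\rho_k(c))$ on $A_k\cap B_k$), one obtains a three-region description of $\Psi$: it is the identity on $H_{\alpha,m+1}^+\subset A\cap B_m$ (both retractions fix this); it coincides with the reflection $s_{\alpha,m+1}$ on the strip $H_{\alpha,m+1}^-\cap H_{\alpha,m}^+$ (where $\rho_m^{op}$ is the identity, $(\rho_{m+1}^{op})^{-1}$ lifts into $A_{m+1}\setminus A$, and the subsequent $\rho_{m+1}$ collapses into $s_{\alpha,m+1}$ by the lemma); and, when precomposed with $\rho_m^{-1}$ on $H_{\alpha,m}^+$, it becomes the translation $t_{\alpha^\vee}=s_{\alpha,m+1}\circ s_{\alpha,m}$, produced by the two reflections at levels $m$ and $m+1$.

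With these three building blocks, matching $\Psi$ applied to $\gamma_k^-\star\rho_m^{-1}(\gamma_k^+)$ against Definition~\ref{def:GLoperators} reduces to a case check on the index ranges: alcoves $c_i$ with $i<j$ lie in $H_{\alpha,m+1}^+$ (using the maximality of $j$ together with the positively folded structure of $\gamma$), so they are fixed; alcoves with $j\leq i<k$ lie in the strip and produce $s_{\alpha,m+1}(c_i)$; alcoves with $i\geq k$ lie in $H_{\alpha,m}^+$ and, after the initial $\rho_m^{-1}$, yield $t_{\alpha^\vee}(c_i)$. The statement for $f_\alpha$ is proved by the mirror-image argument, replacing $\iota_{m+1,m,B}$ by $\iota_{m-1,m,B}$ and $\gamma_k^\pm$ by $\gamma_j^\pm$; the two relevant reflections now sit at levels $m$ and $m-1$, and their composition is $t_{-\alpha^\vee}$.

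The main obstacle I expect is the bookkeeping in the final case check: for each $i$, one must verify that $c_i$ (or its $\rho_m^{-1}$-image) falls into exactly the region producing the alcove prescribed by the definition. The algebraic input making everything collapse, namely turning two retractions into a reflection via Lemma~\ref{lem:retraction reflection} and turning two reflections at adjacent levels into the translation $t_{\pm\alpha^\vee}$, is clean; the nontrivial work is aligning this geometry with the combinatorial index ranges $i<j$, $j\leq i<k$, $i\geq k$.
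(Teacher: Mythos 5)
Your proposal is correct and follows essentially the same route as the paper: a three-region case analysis (alcoves fixed in $H_{\alpha,m+1}^+$, reflected in the strip, translated beyond $f_k$) in which Lemma~\ref{lem:retraction reflection} converts each pair of opposite retractions into a reflection, and the two reflections at levels $m$ and $m+1$ compose to $t_{\alpha^\vee}$. Your preliminary verification that $\gamma_k^-\star\rho_m^{-1}(\gamma_k^+)$ is a well-defined gallery inside $B_m$, and your explicit matching of the index ranges $i<j$, $j\leq i<k$, $i\geq k$ to the three geometric regions, are slightly more careful than the paper's write-up but do not change the argument.
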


\begin{proof}
Retractions in buildings preserve adjacency and dimension of simplices, therefore 
\[
\tilde\gamma\define \rho_{A, \Cfm}( \iota_{m+1, m, B}(\gamma_k^-\star\rho_m^{-1}(\gamma_k^+)))
\]
 is again a combinatorial gallery. 
Let $c$ be one of the $c_i$ in $\gamma$ and let $\tilde c$ denote the image of $c$ in the gallery $\tilde\gamma$. In order to prove the statement we need to see that $\tilde c$  is the same as the image of $c$ under the operator $e_\alpha$. There are three cases: either $c$ is in $\gamma_k^+$, or $c$ is in $\gamma_k^-$ where we distinguish between the case that $c$ is in the strip between $H_{\alpha, m}$ and $H_{\alpha, m+1}$, or $c\in \gamma_k^-$ is in the half space $H_{\alpha, m+1}^+$. 

Suppose first that $c\in \gamma_k^- \cap H_{\alpha, m+1}^+$. In this case $c=c_j$ for some $j<k$ and the root operator $e_\alpha$ thus fixes $c$. As $c\in H_{\alpha, m+1}^+$ one can see that $\rho^{op}_{i}(c)=c$, for $i=m, m+1$ and $\rho_{A, \Cfm}(c)=c$. Hence
$$
\rho_{A, \Cfm}( \iota_{m+1, m, B}(c)) = \rho_{A, \Cfm}\left(\left( (\rho^{op}_{m+1})^{-1}\circ\rho^{op}_m \right)(c)\right) = \rho_{A, \Cfm}(c)=c.
$$

Suppose now that $c\in \gamma_k^-$ is in the strip between $H_{\alpha, m}$ and $H_{\alpha, m+1}$. In this case $e_\alpha$ reflects $c$ along $H_{\alpha, m+1}$. To see what the map on the right hand side of the equation does argue as follows: as  $\rho^{op}_m(c)=c$ we have that
$$
\rho_{A, \Cfm}\left( \iota_{m+1, m, B}(c)\right) = \rho_{A, \Cfm}\left( (\rho^{op}_{m+1})^{-1} (c)\right).
$$
Lemma~\ref{lem:retraction reflection} implies that in fact $\rho_{A, \Cfm}( \iota_{m+1, m, B}(c)) =c$. 

In the last case $c\in\gamma_k^+$. The image of $c$ by $e_\alpha$ is then the translate $t_{\alpha^\vee}(c)$. We need to verify that $\tilde c=t_{\alpha^\vee}(c)$. As $c\in\gamma_k^+$ we may conclude by  Lemma~\ref{lem:retraction reflection} that
$$
\tilde c 
= \rho_{A, \Cfm}\left( \left((\rho^{op}_{m+1})^{-1}\circ\rho^{op}_m\right)\left(\rho_m^{-1}(c)\right)\right)
= \rho_{A, \Cfm}\left( (\rho^{op}_{m+1})^{-1} \left( s_{\alpha, m} (c) \right) \right).
$$
The reflected image $s_{\alpha, m} (c) $ of $c$  is contained in $A\setminus B_{m+1}$ and thus the simplex $c'\define (\rho^{op}_{m+1})^{-1} \left( s_{\alpha, m} (c) \right)$ is contained in $B_{m+1}\setminus A$ which implies that $\rho_{A, \Cfm}(c')=\rho_{m+1}(c')$. Another application of Lemma~\ref{lem:retraction reflection} thus implies that 
$
\tilde c = s_{\alpha, m+1}\left( s_{\alpha, m} (c)  \right) = t_{\alpha^\vee}(c), 
$
which completes the proof in the first case. The the formula for $f_\alpha$ is obtained similarly.  
\end{proof}

Finally we study the operator $\tilde e_\alpha$. 
\begin{thm}\label{thm:etilde}
Suppose that $\gamma=(f_0\subset c_1\supset f_1\subset\ldots \subset c_{n-1}\supset f_n)$ is a gallery for which $\tilde e_\alpha$ is defined. Let $m,j,k$ be as in \ref{not:operators} case (III), that is $j$ is minimal such that $f_j\subset H_{\alpha,m}$ and $k>j$ minimal such that $f_k$ is also contained in $H_{\alpha,m}$. We write $\gamma$ as a concatenation of the following  three sub-galleries: $\gamma_j=(f_0\subset c_1\ldots c_{j-1}\supset f_j)$, $\gamma_{jk}=(f_j\subset c_j \ldots  c_{k-1}\supset f_k) \text{ and } $ and $\gamma_k=(f_k\subset c_k \ldots c_{n-1}\supset f_n)$. That is $\gamma=\gamma_j\star\gamma_{jk}\star\gamma_k$. 
Then 
\[
\tilde e_\alpha(\gamma)= \gamma_j\star \rho_m^{op}(\rho_m^{-1}(\gamma_jk))\star\gamma_k.
\]
\end{thm}
\begin{proof}
Convince yourself that the sub-galleries $\gamma_j$ and $\gamma_k$ remain untouched under an application of $\tilde e_\alpha$ and that the gallery $\gamma_{jk}$, which lies in between $H_{\alpha,m}$ and $H_{\alpha,m-1}$ gets reflected along $H_{\alpha,m}$ by $e_\alpha$. It is then easy to see that the application of  $\rho_m^{op}\circ \rho_m^{-1}$ does just that.  
\end{proof}

\begin{figure}[h]
\begin{center}
\begin{overpic}[width=0.7\textwidth]{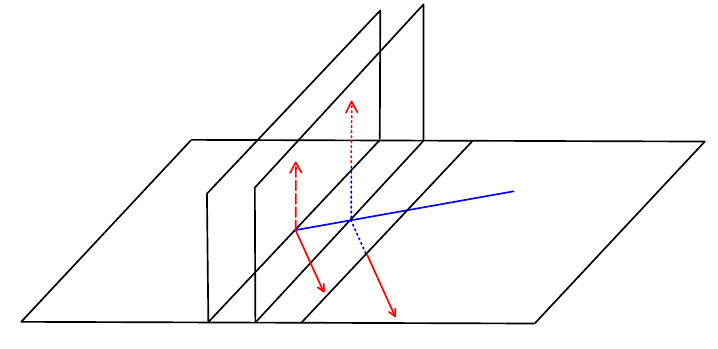}
\put(25,0){$H_{\alpha, m} $}
\put(33,0){$H_{\alpha, m+1} $}
\put(73,22){$\gamma = {\color{blue} \gamma_k^-} \star {\color{red} \gamma_k^+}$}
\put(40,7){\color{red} $\gamma_k^+$}
\put(60,17){\color{blue} $\gamma_k^-$}
\put(28,20){ $\rho_m^{-1}({\color{red}\gamma_k^+}$)}
\put(49,31){ $\iota_{m}\left( {\color{blue} \gamma_k^- }\star \rho_m^{-1}({\color{red}\gamma_k^+})\right)$}
\put(56,7){ $\rho_{A, \Cfm}\left( \iota_{m}\left({\color{blue} \gamma_k^- }\star\rho_m^{-1}({\color{red}\gamma_k^+})\right)\right) = e_\alpha(\gamma)$}
\end{overpic}
\caption[e-alpha]{Step by step illustration of the retractions we expressed the operator $e_\alpha$ with in Theorem~\ref{thm:ef}. Here $\iota_m=\iota_{m+1, m, B}$.}
\label{fig_operators-retractions}
\end{center}
\end{figure}

\begin{example}
In Figure~\ref{fig_operators-retractions} we illustrate the statement of Theorem~\ref{thm:ef} and show how the retractions are used to express the operator $e_\alpha$ with. We write $\iota_m$ for $\iota_{m+1, m, B}$. 

Let $\gamma$ be the concatenation of the bold blue gallery $\gamma_k^-$ and the bold red gallery $\gamma_k^+$. In the first step we keep the initial blue part and take a preimage of the red  piece under the retraction $\rho_m$. The second step consists of an application of a the map $\iota_m=\iota_{m+1, m, B}$ to the concatenation  ${\color{blue} \gamma_k^- }\star \rho_m^{-1}({\color{red}\gamma_k^+})$. This yields a gallery which coincides with $\gamma$ up to the wall $H_{\alpha, m+1}$. A final application of the opposite retraction, namely $\rho_{A, \Cfm}$, gives us the image of $\gamma$ under the root operator $e_\alpha$.   
\end{example}

\subsection{Interpretation in terms of root groups}

We use the fact that one can write pre-images of retractions in terms of groups to also rewrite the statements of Theorem~\ref{thm:ef} and \ref{thm:etilde}.  


\begin{corollary}
Let $\gamma=(f_0\subset c_1\supset f_1\subset\ldots \subset c_n\supset f_n)$  be a combinatorial gallery and let $k\geq 1$, $m\leq -1$ and $j$ be as in \ref{not:operators}. 

Suppose we are in case (I), that is $e_\alpha$ is defined for $\gamma$ and $\gamma$ is decomposed as $\gamma=\gamma_k^-\star\gamma_k^+$. Then there exist elements 
\[
u_1\in U^\ast_{-\alpha,-m}\setminus U^\ast_{-\alpha,-m-1},
\]
\[
u_2\in U^\ast_{-\alpha,-(m+1)}\setminus U^\ast_{\alpha,-m-2} \text{ and }
\]
\[ 
v \in U^\ast_{\alpha, -(m+1)}\setminus U^\ast_{\alpha,-m-2}
\]
 such that 
\[
e_\alpha(\gamma)=u_2\left( v\left(\gamma_k^-\star u_1(\gamma_k^+)\right)\right).
\]
With $u_1$ and $u_2$ as above and $m(u_i)$ as defined in \ref{def:m(u)}, one has
\[
e_\alpha(\gamma)=m(u_2)\left(\gamma_k^-\star m(u_1)(\gamma_k^+)\right). 
\]

Suppose we are in case (II), that is $f_\alpha$ is defined for $\gamma$  and $\gamma$ is decomposed as $\gamma=\gamma_j^-\star\gamma_j^+$. 
Then there exist elements 
\[
u_1\in U^\ast_{-\alpha,-(m+1)}\setminus U^\ast_{\alpha,-m-2} 
\]
\[
u_2\in U^\ast_{-\alpha,-m}\setminus U^\ast_{-\alpha,-m-1}\text{ and }
\]
 such that 
\[
f_\alpha(\gamma)=m(u_2)\left( \gamma_j^-\star m(u_1)(\gamma_j^+)\right).
\]

Suppose we are in case (III), that is $\tilde e_\alpha$ is defined. Decompose  $\gamma$ as a concatenation $\gamma_j\star\gamma_{jk}\star\gamma_k$ as in \ref{thm:etilde}. With $u\in U^\ast_{\alpha, m}\setminus U_{\alpha, m-1}$ and $m(u)$ as in \ref{def:m(u)}, we can write 
\[
\tilde e_\alpha(\gamma)= \gamma_j\star m(u) (\gamma_{jk}))\star\gamma_k.
\]
\end{corollary}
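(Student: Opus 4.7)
The plan is to translate the retraction formulas of Theorems \ref{thm:ef} and \ref{thm:etilde} into root-group-theoretic ones. Three ingredients from the previous sections drive the translation: (i) the fibres of $\rho_{A,\bound C}$ are $U_C$-orbits, so preimages under the restricted retractions $\rho_m$, $\rho_m^{op}$, $\rho_{m+1}^{op}$ can be implemented by the action of suitable elements of $U_{-\alpha}$ or $U_\alpha$; (ii) Proposition \ref{prop:fixed points} and the remark that follows the definition of $U_{\alpha,k}$ together allow one to read off the valuation stratum of $u$ from the shape of its fixed half-apartment $a_u$; and (iii) for $u$ in the stratum $U^\ast_{\beta,k}\setminus U^\ast_{\beta,k-1}$, the element $m(u)$ from Definition \ref{def:m(u)} induces the affine reflection $s_{\beta,-k}$ on $A$.

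For the first ($u_1,u_2,v$) form of case (I) I would realise each retraction in the formula of Theorem \ref{thm:ef} individually. The apartment $A_m$ is characterised by $A\cap A_m = H^-_{\alpha,m}$, so by (i) it is of the form $A_m=u_1A$ for some $u_1\in U_{-\alpha}$ whose fixed half-apartment is exactly $H^-_{\alpha,m}$; (ii) then pins down the valuation stratum of $u_1$ as claimed in the statement. Because both $u_1$ and $\rho_m^{-1}$ are isometries $A\to u_1A$ fixing the intersection pointwise, they agree, so $u_1(\gamma_k^+)=\rho_m^{-1}(\gamma_k^+)$. A parallel analysis of the apartments $B_m$ and $B_{m+1}$ produces elements $v\in U_\alpha$ and $u_2\in U_{-\alpha}$ that realise, respectively, the composition $(\rho^{op}_{m+1})^{-1}\circ\rho^{op}_m$ on the concatenated gallery and the outer retraction $\rho_{A,\Cfm}$. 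Substituting into Theorem \ref{thm:ef} yields the first identity; the cases for $f_\alpha$ and $\tilde e_\alpha$ are handled in the same fashion, with the walls $H_{\alpha,m+1}$ and $H_{\alpha,m}$ replaced by the walls relevant to those operators.

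To recover the $m(u_i)$-form I appeal to Lemma \ref{lem:retraction reflection} together with (iii). The lemma says that the composition $\rho^{op}_m\circ\rho_m^{-1}$, restricted to an alcove across $H_{\alpha,m}$, acts on $A$ as the affine reflection $s_{\alpha,m}$, and (iii) identifies this same reflection with the action on $A$ of $m(u_1)$; hence the inner part of the retraction formula may be replaced by $m(u_1)(\gamma_k^+)$. The same identification applied to $\rho_{A,\Cfm}\circ(\rho^{op}_{m+1})^{-1}$ yields the reflection $s_{\alpha,m+1}$, which is the action of $m(u_2)$. The composition $m(u_2)m(u_1)$ then acts on the tail of the gallery as $s_{\alpha,m+1}s_{\alpha,m}=t_{\alpha^\vee}$, matching the translation prescribed in Definition \ref{def:GLoperators} for $e_\alpha$. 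Case (III) is a direct specialisation of the same argument to the sub-gallery $\gamma_{jk}$ lying in the strip between two consecutive $\alpha$-walls, where a single reflection (and hence a single $m(u)$) suffices.

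The main obstacle is the careful bookkeeping of conventions: at every use of a retraction one must identify on which side of each relevant wall the various sub-galleries sit, which root group fixes that wall, and which valuation stratum contains an element with the correct fixed half-apartment. Once this dictionary is in place, the identities of the corollary reduce to direct substitution into the formulas of Theorems \ref{thm:ef} and \ref{thm:etilde}, combined with the uniqueness of isometries between apartments fixing their intersection pointwise.
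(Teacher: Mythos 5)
Your proposal is correct and follows essentially the same route as the paper: it realises each retraction appearing in Theorems~\ref{thm:ef} and~\ref{thm:etilde} by a root group element whose fixed half-apartment matches the relevant intersection of apartments, pins down the valuation stratum via Proposition~\ref{prop:fixed points}, and identifies the reflections with the elements $m(u_i)$ using Lemma~\ref{lem:retraction reflection}. The paper's own (very terse) proof does exactly this; your version merely spells out the bookkeeping in more detail.
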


\begin{proof}
To see the statement in case (I) convince yourself that both $\rho_k$ and $u_1$ stabilize the half-apartment $H_{-\alpha, m}^+$ pointwise. Similarly $u_2$ and the retraction based at $\Cfm$ do fix $H_{-\alpha, m+1}^+$ and the half-apartment $H_{\alpha, m+1}^+$ is fixed by $v$ and the map $\iota_{m+1,m,B}$. With this we can read off the statement of Theorem~\ref{thm:ef} and the definition of the groups $U_{\beta,k}$.    
The remaining statements are shown accordingly. 
\end{proof}

\subsection{Galleries versus paths}\label{sec:paths}

The original path model was not based on galleries but on paths in the building. And therefore the root operators where also first defined on paths,  see \cite{LittelmannPaths}. When applying sequences of the path version of the root operators to a geodesic from the origin to a dominant weight, the resulting set of (folded) paths is such that each of the occuring paths is the image under a retraction of a pre-image of the geodesic one has started with under a different retraction. 
This is similar to the results on Hecke paths shown by Kapovich--Millson in \cite{KapovichMillson}. 
The crucial observation is the following lemma a proof of which was e.g. given in \cite[Lemma 3.1]{Marquis}.

Define the \emph{(simplicial) support} of a point $x\in X$  to be the smallest simplex containing $x$ in its interior.

\begin{lemma}\label{lem:properties}
Suppose $\pi:\left[0,1\right]\to X$ is a geodesic in a building.  Then $\pi$ is contained in a (not necessarily unique) minimal combinatorial gallery connecting the support of $\pi(0)$ with the support of $\pi(1)$. 
\end{lemma}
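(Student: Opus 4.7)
The plan is to reduce to a single apartment containing $\pi$ and then read off a combinatorial gallery directly from the walls $\pi$ encounters. First I would use the building axiom (B1), applied to the supports of $\pi(0)$ and $\pi(1)$, to produce an apartment $A$ containing both endpoints. Since an affine building is a CAT(0) space and apartments are convex isometric copies of Euclidean space, the unique geodesic $\pi$ lies entirely in $A$. Identifying $A$ with $\R^n$ and its wall arrangement $\{H_{\alpha,m}\}_{\alpha\in\RS,\,m\in\Z}$, the curve $\pi$ is a straight line segment.

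In the generic case where $\pi$ meets each wall in at most a single point, the wall-crossing times form a finite set $0\leq t_1<\cdots<t_l\leq 1$; between consecutive times $\pi$ stays in the interior of a single alcove $c_i$, and at each $t_i$ the point $\pi(t_i)$ lies in the interior of a single panel $p_i$. Appending $p_0$ and $p_{l+1}$ chosen as faces of $c_0$ and $c_l$ containing the supports of $\pi(0)$ and $\pi(1)$ then produces a combinatorial gallery $(p_0\subset c_0\supset p_1\subset\cdots\supset p_{l+1})$ in $A$ that contains $\pi$ by construction. Minimality follows because this length equals the number of walls separating $\pi(0)$ from $\pi(1)$ in $A$, and this is a lower bound for the gallery distance between the two supports in $X$: the retraction $r_{A,c_0}$ onto $A$ fixes the two endpoints and cannot increase gallery length.

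The main obstacle is the degenerate case where $\pi$ runs inside a wall, or an intersection of walls, for a non-trivial sub-interval. Then no top-dimensional simplex is forced by $\pi$ along that sub-interval and a consistent side-choice must be made. I would handle this by induction on the dimension of the smallest affine flat in $A$ containing $\pi([0,1])$: on each maximal sub-segment of $\pi$ lying inside a wall $H$, apply the inductive hypothesis to the Coxeter complex structure carried by $H$, and then lift the resulting sub-gallery back to $A$ by attaching alcoves having those simplices as faces. The bookkeeping needed to check that the glued sequence still satisfies the combinatorial gallery definition of \cite{GaussentLittelmann}, that $\pi$ is contained in the union of the closures of its alcoves, and that the total length remains minimal once wall-walks are counted correctly is the technical heart of the argument; the non-uniqueness in the statement corresponds precisely to the freedom in these side-choices.
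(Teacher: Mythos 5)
Your overall strategy---force $\pi$ into a single apartment by convexity, read a gallery off the sequence of walls crossed, and get minimality by counting separating walls and retracting---is the standard one (the paper itself gives no argument and simply refers to Lemma 3.1 of \cite{Marquis} for the alcove case). But there are two genuine gaps. First, your ``generic case'' is not generic enough: even when $\pi$ is contained in no wall, it can cross several walls at the same instant, i.e.\ pass transversally through a face of codimension at least two (e.g.\ a segment through a vertex of $A$ in rank $2$). At such a time $\pi(t_i)$ does \emph{not} lie in the interior of a panel, and the cells before and after that instant need not share a codimension-one face, so the sequence you write down fails the definition of a combinatorial gallery. One must insert a minimal gallery in the star of that lower-dimensional face connecting $c_{i-1}$ to $c_i$; its length equals the number of walls crossed at that instant, so the total length is still the number of separating walls and minimality survives, but this insertion---which is a further source of the non-uniqueness in the statement---is absent from your dichotomy. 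Second, the degenerate case is only announced: you describe an induction on the carrier flat and then explicitly defer ``the technical heart,'' so that part of the proof is not actually carried out.

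For what it is worth, no induction is needed. Since Gaussent--Littelmann galleries may consist of simplices of any fixed dimension, set $F=\bigcap\{H_{\alpha,m} : \pi([0,1])\subset H_{\alpha,m}\}$ and let the $c_i$ be the supports of $\pi(t)$ on the open intervals between crossing times; these are all faces open in $F$, hence of equal dimension, and the transversal-crossing argument (including the codimension-$\geq 2$ repair above) runs verbatim inside $F$. One more small point: ``apartments are isometric copies of Euclidean space'' does not by itself give convexity of $A$ in $X$; to conclude that the CAT(0) geodesic between two points of $A$ stays in $A$ you should invoke the distance-non-increasing retraction $r_{A,c}$ fixing $A$ pointwise.
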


An immediate consequence of the lemma is that every geodesic $\pi:\left[0,1\right]\to X$ in a building $X$ is contained in an apartment and contained in the convex hull of its endpoints. 

Lemma~\ref{lem:properties} allows us to consider the image of a geodesic under a sequence of gallery root operators as follows. The image $\pi([0,l])$ of the path $\pi$ is, by the lemma, a subset of a minimal gallery $\gamma$. The image of $\pi$ under a sequence of gallery root operators is then the image of the set $\pi([0,l])$ inside $\gamma$ under this sequence of operators applied to $\gamma$.  
A straight forward comparison of the effect of the path root operators on $\pi$ with the image under the gallery root operators implies the following proposition. 

\begin{prop}\label{prop:pathgallery}
Let $\pi:[0,l]\to A$ be a geodesic in $A$ for which a sequence $\sigma$ of path root operators is defined and let $\gamma$ be a minimal combinatorial gallery containing $\pi$ such that the same sequence of gallery root operators is defined. Then the image of $\pi$ under said sequence is the same as the image of $\pi$ under the sequence of the corresponding gallery root operators.
\end{prop}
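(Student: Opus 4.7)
The plan is to argue by induction on the length of the sequence of operators, so it suffices to handle a single root operator $e_\alpha$, $f_\alpha$, or $\tilde e_\alpha$. By Lemma~\ref{lem:properties}, the geodesic $\pi$ lies inside the apartment $A$ and inside the minimal gallery $\gamma$, so everything happens within a single Euclidean apartment. The strategy is then to compare, side by side, the definition of the path operator (\cite{LittelmannPaths}) with Definition~\ref{def:GLoperators}, and show that the two operators act by the \emph{same} affine isometries on the \emph{same} subsets of $A$.

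First I would match up the integer parameter $m$. For the gallery, $m$ is defined as the minimum of $\langle p_i, \alpha\rangle$ over the faces $p_i$; for the path, $m$ is analogously the minimum of $\langle \pi(t), \alpha\rangle$ over $t\in[0,l]$. Because $\pi\subset\gamma$ one direction is obvious, and because $\gamma$ is minimal the deepest faces $p_i$ in the $-\alpha$-direction are forced to lie on $\pi$ (the minimum is attained by $\pi$ at the boundary of the corresponding simplex), so the two values agree. Next I would match the subdivision indices: in Notation~\ref{not:operators} the indices $j$ and $k$ single out faces $p_j, p_k$ on the walls $H_{\alpha, m}$ and $H_{\alpha, m+1}$; on the path side they correspond to the times $t_j, t_k\in[0,l]$ at which $\pi$ first and last touches the corresponding hyperplane. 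Again the inclusion $\pi\subset\gamma$ plus minimality of $\gamma$ guarantee that these times fall precisely at $\pi^{-1}(p_j)$ and $\pi^{-1}(p_k)$, so the subdivision of $\pi$ into initial, middle, and terminal pieces matches the gallery decomposition $\gamma=\gamma_k^-\star\gamma_k^+$ (resp.\ $\gamma_j\star\gamma_{jk}\star\gamma_k$ in case (III)).

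Once the two subdivisions are known to agree, I would verify that the prescribed transformations coincide: on the middle piece both operators apply the reflection $s_{\alpha, m+1}$ along $H_{\alpha, m+1}$, and on the terminal piece both apply the translation $t_{\pm\alpha^\vee}$ (or, in case (III), leave the piece fixed). Since these are affine isometries of $A$ uniquely determined by the simple root $\alpha$ and the integer $m$, and since restricting such an isometry of $A$ to $\gamma$ and then to $\pi\subset\gamma$ agrees with applying it directly to $\pi$, the path image computed the gallery way coincides with the path image computed the path way.

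The main obstacle I expect is the careful bookkeeping of endpoints when $\pi$ traverses a face that lies on a wall, or grazes a wall without crossing it. Here one must be sure that the ``first time $\pi$ meets $H_{\alpha,m}$'' coincides with the simplicial face $p_j$ used in Definition~\ref{def:GLoperators}; this is where minimality of $\gamma$ and the fact (mentioned in the paragraph preceding Lemma~\ref{lem:properties}) that a wall-geodesic is contained in a minimal gallery lying in that wall are essential. Beyond that, the comparison is a direct case-by-case check against the two definitions, with no input from the ambient building beyond what happens in the single apartment $A$.
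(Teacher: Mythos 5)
Your proposal is correct and follows essentially the same route the paper takes: the paper's entire argument consists of the remark that, by Lemma~\ref{lem:properties}, one only needs to compare the definition of the path folding operators with that of the gallery folding operators inside a single apartment, which is precisely the case-by-case matching of $m$, $j$, $k$ and the reflections/translations that you carry out. Your write-up is in fact more detailed than the paper's, which leaves the comparison of the two definitions to the reader.
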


\renewcommand{\refname}{Bibliography}
\bibliography{bibliography}

\begin{thebibliography}{KLM09}

\bibitem[AB08]{AB}
P.~Abramenko and K.~S. Brown.
\newblock {\em Buildings, {T}heory and applications}, volume 248 of {\em
  Graduate Texts in Mathematics}.
\newblock Springer, New York, 2008.

\bibitem[Bou02]{Bourbaki4-6}
N.~Bourbaki.
\newblock {\em {L}ie groups and {L}ie algebras. {C}hapters 4--6}.
\newblock Elements of Mathematics (Berlin). Springer Verlag, Berlin, 2002.
\newblock Translated from the 1968 French original by Andrew Pressley.

\bibitem[Bro89]{Brown}
K.~S. Brown.
\newblock {\em Buildings}.
\newblock Springer-Verlag, New York, 1989.

\bibitem[BT72]{BruhatTits}
F.~Bruhat and J.~Tits.
\newblock Groupes r\'eductifs sur un corps local.
\newblock {\em Inst. Hautes \'Etudes Sci. Publ. Math.}, (41):5--251, 1972.

\bibitem[GL05]{GaussentLittelmann}
S.~Gaussent and P.~Littelmann.
\newblock L{S} galleries, the path model and {MV} cycles.
\newblock {\em Duke Math. J.}, 127(1):35--88, 2005.

\bibitem[Hit10]{Hitzelberger}
P.~Hitzelberger.
\newblock Kostant convexity for affine buildings.
\newblock {\em Forum Math.}, 22(5):959--971, 2010.

\bibitem[KL97]{KleinerLeeb}
B.~Kleiner and B.~Leeb.
\newblock Rigidity of quasi-isometries for symmetric spaces and {E}uclidean
  buildings.
\newblock {\em Inst. Hautes \'Etudes Sci. Publ. Math.}, (86):115--197, 1997.

\bibitem[KLM08]{KLM3}
M.~Kapovich, B.~Leeb, and J.~Millson.
\newblock The generalized triangle inequalities in symmetric spaces and
  buildings with applications to algebra.
\newblock {\em Mem. Amer. Math. Soc.}, 192(896):viii+83, 2008.

\bibitem[KLM09]{KLM1}
M.~Kapovich, B.~Leeb, and J.~Millson.
\newblock Polygons in buildings and their refined side lengths.
\newblock {\em Geom. Funct. Anal.}, 19(4):1081--1100, 2009.

\bibitem[KM08]{KapovichMillson}
M.~Kapovich and J.~Millson.
\newblock A path model for geodesics in {E}uclidean buildings and its
  applications to representation theory.
\newblock {\em Groups Geom. Dyn.}, 2(3):405--480, 2008.

\bibitem[Lit95]{LittelmannPaths}
P.~Littelmann.
\newblock Paths and root operators in representation theory.
\newblock {\em Ann. of Math. (2)}, 142(3):499--525, 1995.

\bibitem[Mar14]{Marquis}
T.~Marquis.
\newblock Conjugacy classes and straight elements in {C}oxeter groups.
\newblock {\em J. Algebra}, 407:68--80, 2014.

\bibitem[MST19]{ADLV}
E.~Mili\'{c}evi\'{c}, P.~Schwer, and A.~Thomas.
\newblock Dimensions of affine {D}eligne-{L}usztig varieties: A new approach
  via labeled folded alcove walks and root operators.
\newblock {\em arXiv:1504.07076; to appear in Memoirs of the AMS}, 2019.

\bibitem[PRS09]{ParkinsonRamSchwer}
J.~Parkinson, A.~Ram, and C.~Schwer.
\newblock Combinatorics in affine flag varieties.
\newblock {\em J. Algebra}, 321(11):3469--3493, 2009.

\bibitem[Ron89]{Ronan}
M.~Ronan.
\newblock {\em Lectures on buildings}, volume~7 of {\em Perspectives in
  Mathematics}.
\newblock Academic Press Inc., Boston, MA, 1989.

\bibitem[Sch87]{Scharlau}
R.~Scharlau.
\newblock A structure theorem for weak buildings of spherical type.
\newblock {\em Geom. Dedicata}, 24(1):77--84, 1987.

\bibitem[Wei08]{AffineWeiss}
R.~M. Weiss.
\newblock {\em The structure of affine buildings}.
\newblock Annals of mathematics studies. Princeton University Press, Princeton,
  NJ, 2008.

\end{thebibliography}
\bibliographystyle{alpha}

\end{document}